\documentclass[12pt,a4paper]{article}

\usepackage[T1]{fontenc}
\usepackage[utf8]{inputenc}
\usepackage[english]{babel}

\usepackage{mathrsfs}
\usepackage{amsmath}
\usepackage{amssymb}
\usepackage{amsthm}
\usepackage[colorlinks=true, linkcolor=black, citecolor=black, urlcolor=blue]{hyperref}
\usepackage{microtype}

\newtheorem{Theorem}{Theorem}
\newtheorem{Lemma}{Lemma}
\newtheorem{Proposition}{Proposition}
\newtheorem{Definition}{Definition}

\title{
	On the Strong Law of Large Numbers under Sublinear Expectations
}

\author{I.V. Kozlov\footnote{M.V. Lomonosov Moscow State University; Institute for Information Transmission Problems (A.A. Kharkevich Institute); Vega Institute Foundation; email: ilia.kozlov@math.msu.ru}
}

\date{}

\begin{document}
	\maketitle
	
	\begin{abstract}
		\noindent
		A strong law of large numbers (SLLN) for sublinear expectation under a certain integrability condition is proved. It is shown that the proposed integrability condition is weaker than the majorant condition introduced by Cheng Hu (2018). The proof is based on the truncation method of N. Etemadi (1981).
		
		\medskip
		
		\noindent
		Keywords: Strong law of large numbers; sublinear expectation.
		
		\medskip
		
		\noindent
		MSC2020: 60F15 \\
		UDK: {519.214}
		
	\end{abstract}
	
	\section{Introduction}
	
	Sublinear expectation theory, developed by Shige Peng (see \cite{Peng97, Peng07, Peng10}), provides a mathematical framework for stochastic models under distributional uncertainty. It often appears in stochastic control, risk measures, and the theory of backward stochastic differential equations. An important question in this context is whether classical limit theorems extend to the setting of nonlinear expectations. Peng also proved the first law of large numbers in this context. After his work, other researchers proposed several versions of the strong law of large numbers for sublinear expectations. However, existing results require quite restrictive integrability assumptions. In particular, C. Hu \cite{Hu18} established the SLLN under the assumption that the sequence admits an integrable majorant.
	
	We prove a new version of the SLLN for sublinear expectations using weaker, more general conditions. Instead of requiring a global majorant, our new condition uses the supremum of integrals over a family of probability measures. Our proof follows the truncation argument introduced by Etemadi \cite{Etemadi81} in the classical setting. We show that this method can be adapted to sublinear expectations by combining truncation with the Borel--Cantelli lemma for capacities. Lastly, we construct an example demonstrating that our condition is strictly weaker than Hu's majorant condition \cite{Hu18}.
	
	\section{Basic Concepts and Statements}
	We use the standard notation introduced by Peng \cite{Peng97, Peng07, Peng08, Peng10}. Let $(\Omega, \mathcal{F})$ be a given measurable space. Let $\mathcal{H}$ be a subset of all random variables defined on $(\Omega, \mathcal{F})$ such that $I_M \in \mathcal{H}$ for all $M \in \mathcal{F}$, and if $X_1, X_2, \ldots, X_n \in \mathcal{H}$, then $\varphi(X_1, X_2, \ldots, X_n) \in \mathcal{H}$ for each $\varphi \in C_{l,\mathrm{Lip}}(\mathbb{R}^n)$, where $C_{l,\mathrm{Lip}}(\mathbb{R}^n)$ denotes the linear space of locally Lipschitz functions $\varphi$ satisfying
	\[
	|\varphi(x) - \varphi(y)|
	\leq C\bigl(1 + |x|^m + |y|^m\bigr)|x - y|,
	\qquad \forall\, x, y \in \mathbb{R}^n,
	\]
	where $C > 0$ and $m \in \mathbb{N}$ depend on $\varphi$.
	
	\begin{Definition}
		\label{def:2.1}
		A \textit{sublinear expectation} $\mathbb{E}$ on $\mathcal{H}$ is a functional $\mathbb{E}\colon \mathcal{H} \to \overline{\mathbb{R}} := [-\infty, +\infty]$ satisfying the following properties for all $X, Y \in \mathcal{H}$:
		\begin{enumerate}
			\item[(a)] \textit{Monotonicity.} If $X \geq Y$, then $\mathbb{E}[X] \geq \mathbb{E}[Y]$.
			\item[(b)] \textit{Constant preserving.} $\mathbb{E}[c] = c$, $\forall\, c \in \mathbb{R}$.
			\item[(c)] \textit{Positive homogeneity.} $\mathbb{E}[\lambda X] = \lambda\,\mathbb{E}[X]$, $\forall\, \lambda \geq 0$.
			\item[(d)] \textit{Subadditivity.} $\mathbb{E}[X + Y] \leq \mathbb{E}[X] + \mathbb{E}[Y]$, provided that $\mathbb{E}[X] + \mathbb{E}[Y]$ does not take the form $\infty - \infty$ or $-\infty + \infty$.
		\end{enumerate}
	\end{Definition}
	
	\begin{Definition}
		\label{def:2.2}
		A set function $V : \mathcal{F} \to [0,1]$ is called a \textit{capacity} if it satisfies:
		\begin{enumerate}
			\item[(a)] $V(\emptyset) = 0$, $V(\Omega) = 1$.
			\item[(b)] $V(A) \leq V(B)$ for $A \subset B$, $A, B \in \mathcal{F}$.
		\end{enumerate}
		A capacity $V$ is called \textit{sub-additive} if $V(A \cup B) \leq V(A) + V(B)$ for all $A, B \in \mathcal{F}$.
	\end{Definition}
	
	Throughout this paper, we consider only capacities induced by a sublinear expectation. Let $(\Omega, \mathcal{H}, \mathbb{E})$ be a sublinear expectation space. We define the capacity: $V(A) := \mathbb{E}[I_A]$, $\forall A \in \mathcal{F}$, and the conjugate capacity: $v(A) := 1 - V(A^c)$, $\forall A \in \mathcal{F}$. It is easy to see that $V$ is a sub-additive capacity and $v(A) = \mathcal{E}[I_A]$, where $\mathcal{E}[X] := -\mathbb{E}[-X]$.
	
	\begin{Definition}
		\label{def:2.3}
		A sublinear expectation $\mathbb{E} : \mathcal{H} \to \mathbb{R}$ is said to be \textit{continuous} if it satisfies:
		\begin{enumerate}
			\item[(1)] \textit{Lower-continuity:} If $X_n \uparrow X$, then
			$\mathbb{E}[X_n] \uparrow \mathbb{E}[X]$, where $0 \le X_n$, $X \in \mathcal{H}$.
			
			\item[(2)] \textit{Upper-continuity:} If $X_n \downarrow X$, then
			$\mathbb{E}[X_n] \downarrow \mathbb{E}[X]$, where $0 \le X_n$, $X \in \mathcal{H}$.
		\end{enumerate}
		
		A capacity $V : \mathcal{F} \to [0,1]$ is called a \textit{continuous capacity} if it satisfies:
		\begin{enumerate}
			\item[(1)] \textit{Lower-continuity:} If $A_n \uparrow A$, then
			$V(A_n) \uparrow V(A)$, where $A_n, A \in \mathcal{F}$.
			
			\item[(2)] \textit{Upper-continuity:} If $A_n \downarrow A$, then
			$V(A_n) \downarrow V(A)$, where $A_n, A \in \mathcal{F}$.
		\end{enumerate}
	\end{Definition}
	
	Throughout this paper, we assume that the representing family of probability measures $\mathcal{P}$ consists of $\sigma$-additive measures on $(\Omega, \mathcal{F})$ (see \cite{Peng10}). The following proposition summarizes these representation results.
	
	\begin{Proposition}
		\label{prop:2.2}
		Let $(\Omega, \mathcal{H}, \mathbb{E})$ be a sublinear expectation space.
		\begin{enumerate}
		\item[(1)] There exists a family of countably additive probability measures $\{P_\theta : \theta \in \Theta\}$ on $(\Omega, \mathcal{F})$ such that for each $X \in \mathcal{H}$,
			\[
			\mathbb{E}[X] = \sup_{\theta \in \Theta} E_{P_\theta}[X].
			\]
			\item[(2)]  For any fixed $X \in \mathcal{H}$, there exists a family of probability measures $\{\mu_\theta\}_{\theta \in \Theta}$ on $(\mathbb{R}, \mathcal{B}(\mathbb{R}))$ such that for each $\varphi \in C_{l,\mathrm{Lip}}(\mathbb{R})$,
			\[
			\mathbb{E}[\varphi(X)] = \sup_{\theta \in \Theta} \int_{\mathbb{R}} \varphi(x)\,\mu_\theta(\mathrm{d}x).
			\]
		\end{enumerate}
	\end{Proposition}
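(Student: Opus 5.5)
The plan is to get part (1) from the Hahn--Banach theorem applied to the sublinear functional $\mathbb{E}$, and then read part (2) off as the law of $X$ under each $P_\theta$. Two conventions are needed. First, $\mathcal{H}$ is a vector lattice containing the constants, which follows from closure under $C_{l,\mathrm{Lip}}$ maps: $(x,y)\mapsto x+y$, $x\mapsto \lambda x$ and $x\mapsto |x|$ all qualify. Second, we work on the subspace where $\mathbb{E}$ is finite, as in Peng's framework, so that $\mathbb{E}$ is a genuine real-valued sublinear functional.

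Fix $X_0\in\mathcal{H}$ with $\mathbb{E}[X_0]$ finite, and define $\ell(\lambda X_0)=\lambda\mathbb{E}[X_0]$ on $\mathrm{span}\{X_0\}$.
\begin{itemize}
\item \textbf{Domination.} We have $\ell\le\mathbb{E}$ on this span. For $\lambda\ge 0$ this is positive homogeneity. For $\lambda<0$ it follows from $0=\mathbb{E}[X_0-X_0]\le \mathbb{E}[X_0]+\mathbb{E}[-X_0]$.
\item \textbf{Extension.} Hahn--Banach extends $\ell$ to a linear functional $E_\theta$ on $\mathcal{H}$ with $E_\theta\le\mathbb{E}$ and $E_\theta[X_0]=\mathbb{E}[X_0]$.
\item \textbf{Positivity.} $E_\theta$ is monotone: if $X\ge0$, then $E_\theta[-X]\le \mathbb{E}[-X]\le 0$. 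It is normalised, since $E_\theta[c]\le c$ and $-E_\theta[c]=E_\theta[-c]\le -c$ together give $E_\theta[c]=c$.
\item \textbf{Representation.} Indexing $\Theta$ by all such $X_0$ gives $\mathbb{E}[X]=\sup_\theta E_\theta[X]$: each $E_\theta\le\mathbb{E}$, and equality is attained at $\theta=X_0=X$.
\end{itemize}

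The main obstacle is upgrading each $E_\theta$ to a countably additive probability measure. Since $I_M\in\mathcal{H}$ for every $M\in\mathcal{F}$, the formula $P_\theta(M):=E_\theta[I_M]$ defines a finitely additive probability on $\mathcal{F}$. Countable additivity is not automatic. This is exactly where I would use the standing assumption stated just before the proposition, that the representing measures are $\sigma$-additive, together with the continuity in Definition~\ref{def:2.3}.

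Concretely, let $A_n\downarrow\emptyset$. Then
\[
0\le P_\theta(A_n)\le \mathbb{E}[I_{A_n}]=V(A_n)\downarrow 0
\]
by upper-continuity of $\mathbb{E}$ (equivalently, of $V$). So $P_\theta$ is continuous at $\emptyset$, hence $\sigma$-additive.

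Next we identify $E_\theta[X]$ with $\int X\,dP_\theta$. I would approximate $X$ by the truncations $(X\wedge N)\vee(-N)$ and then by simple functions. Linearity and monotonicity of $E_\theta$ give equality for simple functions. Uniform approximation handles bounded $X$, because $|E_\theta[Y]|\le \mathbb{E}[|Y|]\le\sup|Y|$. Lower-continuity of $\mathbb{E}$ applied to $|X|-|X|\wedge N\downarrow0$ controls the tails, which gives equality for general $X$.

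Part (2) is then immediate. Take $\mu_\theta:=P_\theta\circ X^{-1}$ on $(\mathbb{R},\mathcal{B}(\mathbb{R}))$. For $\varphi\in C_{l,\mathrm{Lip}}(\mathbb{R})$ we have $\varphi(X)\in\mathcal{H}$, so by part (1) and the change-of-variables formula,
\[
\mathbb{E}[\varphi(X)]=\sup_\theta E_{P_\theta}[\varphi(X)]=\sup_\theta\int_{\mathbb{R}}\varphi(x)\,\mu_\theta(\mathrm{d}x).
\]
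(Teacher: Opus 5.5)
Your route is the standard robust Daniell--Stone argument from Peng's book: Hahn--Banach gives linear functionals $E_\theta\le\mathbb{E}$ attaining $\mathbb{E}[X_0]$, a continuity hypothesis makes them $\sigma$-additive integrals, and push-forward gives (2). The paper itself gives no proof; it cites \cite{Peng10} and makes $\sigma$-additivity of $\mathcal{P}$ a standing assumption.

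You are right that an extra hypothesis is unavoidable. Take $\Omega=\mathbb{N}$, $\mathcal{H}$ the bounded functions and $\mathbb{E}[X]=\limsup_n X(n)$. Any $\sigma$-additive $P$ with $E_P\le\mathbb{E}$ then has $P(\{n\})=0$ for every $n$, so (1) fails. You cannot borrow the hypothesis from the standing assumption, since that assumption is the conclusion of (1). What you actually use is upper-continuity (Definition~\ref{def:2.3}(2)). That is not among the proposition's hypotheses and is stronger than the lower-continuity assumed in Theorem~\ref{thm:main_slln}, so it must be stated as an added hypothesis. Also, the set where $\mathbb{E}$ is finite is not a vector space; work on $\{X:\mathbb{E}[|X|]<\infty\}$ instead.

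The genuine gap is the tail step. The sequence $(|X|-N)^+=|X|-|X|\wedge N$ is decreasing, so lower-continuity does not apply to it. Lower-continuity only gives $\mathbb{E}[|X|\wedge N]\uparrow\mathbb{E}[|X|]$, and subadditivity, $\mathbb{E}[|X|]\le\mathbb{E}[|X|\wedge N]+\mathbb{E}[(|X|-N)^+]$, points the wrong way.

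Upper-continuity of $V$ alone (your ``equivalently'') does not suffice either. Let $\Omega=\{0,1,2,\dots\}$, $P_n=(1-\frac1n)\delta_0+\frac1n\delta_n$, and $\mathbb{E}=\sup_n E_{P_n}$ on $\{X:\mathbb{E}[|X|]<\infty\}$.
\begin{itemize}
\item This $\mathbb{E}$ is lower-continuous.
\item $V(A)=1/\min A$ for nonempty $A\not\ni 0$, so $V$ is upper-continuous and every $P_\theta$ is $\sigma$-additive.
\item For $X_0(k)=k^2/(k+1)$ we get $\mathbb{E}[X_0]=\sup_n\frac{n}{n+1}=1$.
\item The Banach limit $E_\theta=\mathrm{LIM}_n E_{P_n}$ is an admissible extension with $E_\theta[X_0]=1$, yet $P_\theta=\delta_0$ and $\int X_0\,dP_\theta=0$.
\end{itemize}
In fact every $\sigma$-additive $P$ with $E_P\le\mathbb{E}$ has $\sum_k kP(\{k\})\le 1$, hence $E_P[X_0]<1$. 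So no choice of extension rescues the identification $E_\theta[X]=\int X\,dP_\theta$. What is needed is exactly $\lim_N\mathbb{E}[(|X|-N)^+]=0$, i.e.\ Definition~\ref{def:2.3}(2) for $\mathbb{E}$ on unbounded variables. With that made explicit, your proof of (1) is complete.

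Finally, (2) needs no continuity of $\mathbb{E}$ at all. Apply Hahn--Banach directly to $\varphi\mapsto\mathbb{E}[\varphi(X)]$ on $C_{l,\mathrm{Lip}}(\mathbb{R})$. Dini's theorem on $[-N,N]$ together with moment bounds on $\{|X|>N\}$ gives the Daniell property, which is how Peng obtains it. Deriving (2) from (1) imports an unnecessary hypothesis.
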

	
	\begin{Definition}
		\label{def:2.4}
		Given a capacity $V$, a set $A$ is called \textit{polar} if $V(A) = 0$. A property is said to hold \textit{quasi-surely} (q.s.) if it holds outside a polar set.
	\end{Definition}
	
	\begin{Definition}
		\label{def:2.5}
		(Independence) Let $\boldsymbol{X} = (X_1, \ldots, X_m)$, $X_i \in \mathcal{H}$, and $\boldsymbol{Y} = (Y_1, \ldots, Y_n)$, $Y_i \in \mathcal{H}$ be two random vectors on $(\Omega, \mathcal{H}, \mathbb{E})$. $\boldsymbol{Y}$ is \textit{independent of} $\boldsymbol{X}$ if for each function $\varphi \in C_{l,\mathrm{Lip}}(\mathbb{R}^m \times \mathbb{R}^n)$,
		\[
		\mathbb{E}[\varphi(\boldsymbol{X}, \boldsymbol{Y})] = \mathbb{E}\bigl[\mathbb{E}[\varphi(\boldsymbol{x}, \boldsymbol{Y})] \big|_{\boldsymbol{x} = \boldsymbol{X}}\bigr]
		\]
		provided that $\bar{\varphi}(\boldsymbol{x}) := \mathbb{E}[|\varphi(\boldsymbol{x}, \boldsymbol{Y})|] < \infty$ for all $\boldsymbol{x}$ and $\mathbb{E}[|\bar{\varphi}(\boldsymbol{X})|] < \infty$. A sequence $\{X_n\}_{n=1}^{\infty}$ is called a \textit{sequence of independent random variables} if $X_{n+1}$ is independent of $(X_1, \ldots, X_n)$ for each $n \geq 1$.
	\end{Definition}
	
	\begin{Definition}
		\label{def:2.6}
		(Identical distribution) Let $\boldsymbol{X}_1$ and $\boldsymbol{X}_2$ be two $n$-dimensional random vectors defined on $(\Omega_1, \mathcal{H}_1, \mathbb{E}_1)$ and $(\Omega_2, \mathcal{H}_2, \mathbb{E}_2)$ respectively. They are called \textit{identically distributed} if
		\[
		\mathbb{E}_1[\varphi(\boldsymbol{X}_1)] = \mathbb{E}_2[\varphi(\boldsymbol{X}_2)], \qquad \forall\, \varphi \in C_{l,\mathrm{Lip}}(\mathbb{R}^n),
		\]
		provided the corresponding sublinear expectations are finite.
	\end{Definition}
	
	The following lemmas are needed in the sequel. Their proofs are available in~\cite{Chen13}.
	
	\begin{Lemma}[Borel--Cantelli lemma]
		\label{lem:2.1}
		Let $\{A_n\}_{n=1}^{\infty}$ be a sequence of events in $\mathcal{F}$, and let $V$ be a capacity induced by a lower-continuous sublinear expectation $\mathbb{E}$. If
		\[
		\sum_{n=1}^{\infty} V(A_n) < \infty,
		\]
		then
		\[
		V\!\left(\bigcap_{n=1}^{\infty}\bigcup_{i=n}^{\infty} A_i\right)=0.
		\]
	\end{Lemma}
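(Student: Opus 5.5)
The argument follows the classical first Borel--Cantelli lemma, with $\sigma$-additivity replaced by lower-continuity of $\mathbb{E}$ (hence of $V$) together with the sub-additivity of $V$. Write $B_n := \bigcup_{i=n}^{\infty} A_i$ and $A := \bigcap_{n=1}^{\infty} B_n$. Since $A \subset B_n$ for every $n$, monotonicity of the capacity gives $V(A) \le V(B_n)$ for all $n$. It therefore suffices to show that $V(B_n) \le \sum_{i=n}^{\infty} V(A_i)$. Once this holds, convergence of $\sum_n V(A_n)$ makes the right-hand side a tail of a convergent series, which tends to $0$ as $n \to \infty$, and we get $V(A) = 0$.

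To prove the countable sub-additivity bound, fix $n$ and set $C_m := \bigcup_{i=n}^{m} A_i$ for $m \ge n$. Each $C_m$ lies in $\mathcal{F}$, and $C_m \uparrow B_n$ as $m \to \infty$. Finite sub-additivity of $V$ is noted after Definition~\ref{def:2.2}. It follows from the subadditivity of $\mathbb{E}$ and the pointwise bound $I_{C \cup D} \le I_C + I_D$; all these indicators lie in $\mathcal{H}$ and their expectations are finite, so no $\infty-\infty$ issue arises. Iterating it gives $V(C_m) \le \sum_{i=n}^{m} V(A_i) \le \sum_{i=n}^{\infty} V(A_i)$.

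We now pass to the limit in $m$, and this is the only place where the continuity hypothesis is needed. The indicators satisfy $0 \le I_{C_m} \uparrow I_{B_n}$, and all of them lie in $\mathcal{H}$ because $\mathcal{H}$ contains every indicator of a set in $\mathcal{F}$. Lower-continuity of $\mathbb{E}$ (Definition~\ref{def:2.3}(1)) therefore yields $V(C_m) = \mathbb{E}[I_{C_m}] \uparrow \mathbb{E}[I_{B_n}] = V(B_n)$. Letting $m \to \infty$ in the finite bound gives $V(B_n) \le \sum_{i=n}^{\infty} V(A_i)$. Combining this with the first paragraph, $V(A) \le \sum_{i=n}^{\infty} V(A_i) \to 0$, which is the claim.

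The main step is the countable sub-additivity of $V$, and it genuinely needs lower-continuity: for a general sublinear expectation, finite sub-additivity alone would not control $V$ of an infinite union. Note that upper-continuity is not used. We never pass to the limit along the decreasing sequence $B_n \downarrow A$; we only use the monotonicity bound $V(A) \le V(B_n)$.
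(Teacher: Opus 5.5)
Your proof is correct and follows the standard argument. The paper gives no proof of its own and only cites \cite{Chen13}, where the argument has the same structure as yours: bound $V(\bigcap_n\bigcup_{i\ge n}A_i)$ by $V(\bigcup_{i\ge n}A_i)$, get countable sub-additivity of $V$ from finite sub-additivity together with lower-continuity applied to $I_{\bigcup_{i=n}^m A_i}\uparrow I_{\bigcup_{i\ge n}A_i}$, and let the tail $\sum_{i\ge n}V(A_i)\to 0$.
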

	
	\begin{Lemma}
		\label{lem:2.2}
		\textnormal{(Chebyshev's inequality)}
		Let $f(x) > 0$ be a non-decreasing function on $\mathbb{R}$. Then for any $x$,
		\[
		V(X \geq x) \leq \frac{\mathbb{E}[f(X)]}{f(x)}, \qquad v(X \geq x) \leq \frac{\mathcal{E}[f(X)]}{f(x)}.
		\]
	\end{Lemma}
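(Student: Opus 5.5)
Chebyshev's inequality in Lemma~\ref{lem:2.2} follows in essentially one line from monotonicity and positive homogeneity of $\mathbb{E}$. We compare indicators pointwise and then take sublinear expectations. The second inequality is handled the same way with $\mathcal{E}$ in place of $\mathbb{E}$, after checking that $\mathcal{E}$ inherits the two properties we use.

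\emph{Step 1 (pointwise bound).} Fix $x\in\mathbb{R}$. Since $f$ is non-decreasing, on the event $\{X\ge x\}$ we have $f(X)\ge f(x)$. Since $f>0$, we have $f(X)\ge 0$ everywhere. Therefore
\[
I_{\{X\ge x\}} \le \frac{f(X)}{f(x)} \qquad \text{on } \Omega .
\]
Indeed, on $\{X\ge x\}$ the right-hand side is at least $1$, and off it the left-hand side is $0$ while the right-hand side is nonnegative.

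\emph{Step 2 (apply $\mathbb{E}$).} The indicator $I_{\{X\ge x\}}$ lies in $\mathcal{H}$ because $\{X\ge x\}\in\mathcal{F}$. We tacitly assume $f(X)\in\mathcal{H}$; this is automatic if $f\in C_{l,\mathrm{Lip}}$, and otherwise it is the standing convention. Monotonicity (a) and positive homogeneity (c), applied with $\lambda=1/f(x)>0$, then give $V(X\ge x)=\mathbb{E}[I_{\{X\ge x\}}]\le \mathbb{E}[f(X)]/f(x)$.

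\emph{Step 3 (conjugate version).} For $\mathcal{E}[Y]=-\mathbb{E}[-Y]$ we check two properties. First, $\mathcal{E}$ is monotone: $Y\ge Z$ implies $-Y\le -Z$, so $\mathbb{E}[-Y]\le\mathbb{E}[-Z]$, hence $\mathcal{E}[Y]\ge\mathcal{E}[Z]$. Second, $\mathcal{E}$ is positively homogeneous, since $\mathcal{E}[\lambda Y]=-\lambda\mathbb{E}[-Y]$ for $\lambda\ge0$. Recall from the discussion after Definition~\ref{def:2.2} that $v(A)=\mathcal{E}[I_A]$. Applying $\mathcal{E}$ to the pointwise bound of Step 1 gives $v(X\ge x)\le \mathcal{E}[f(X)]/f(x)$.

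There is no real obstacle. The only points needing care are the membership $f(X)\in\mathcal{H}$, the possibility $\mathbb{E}[f(X)]=+\infty$ (then the bound is trivial), and the strict positivity of $f(x)$, which is what allows the division.
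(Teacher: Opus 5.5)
Your proof is correct and is the standard argument. The paper gives no proof of its own and defers to Chen, Wu and Li (2013), where the inequality follows from the same pointwise bound $I_{\{X\ge x\}}\le f(X)/f(x)$ together with monotonicity and positive homogeneity of $\mathbb{E}$ (and of $\mathcal{E}$ for the conjugate capacity).
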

	
	\section{Main Theorem}
	
	\begin{Theorem}\label{thm:main_slln}
		Given a sublinear expectation space $(\Omega, \mathcal{H}, \mathbb{E})$ where $\mathbb{E}$ is lower-continuous and $V$ is the induced capacity, let $\mathcal{P}$ be the family of probability measures representing $\mathbb{E}$. Let $\{X_n\}_{n=1}^{\infty}$ be a sequence of independent and identically distributed non-negative random variables in $\mathcal{H}$. Suppose that the following integrability condition holds:
		\[
		\sum_{k=0}^{\infty} \sup_{Q \in \mathcal{P}} \int_k^{k+1} x \, \mathrm{d}F_{Q}(x) < \infty,
		\]
		where $F_{Q}(x) = Q(X_1 \le x)$ is the distribution function of $X_1$ under $Q$. 
		
		Then, for $S_n = \sum_{i=1}^n X_i$, we have
		\[
		V \left( \limsup_{n \to \infty} \frac{S_n}{n} > \mathbb{E}[X_1] \ \cup \ \liminf_{n \to \infty} \frac{S_n}{n} < -\mathbb{E}[-X_1] \right) = 0.
		\]
	\end{Theorem}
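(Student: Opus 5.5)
Write $\overline{\mu}:=\mathbb{E}[X_1]$ and $\underline{\mu}:=-\mathbb{E}[-X_1]$. We adapt Etemadi's truncation scheme, treating the upper bound $\limsup S_n/n\le\overline{\mu}$ q.s. and the lower bound $\liminf S_n/n\ge\underline{\mu}$ q.s. separately. Since $V$ is sub-additive, it suffices to show that each exceptional event is polar. Non-negativity of the $X_n$ is used exactly as in Etemadi: $S_n$ is nondecreasing, so it suffices to prove convergence along the geometric subsequences $k_n=\lfloor\alpha^n\rfloor$, $\alpha>1$. We then interpolate, using $\frac{k_n}{k_{n+1}}\frac{S_{k_n}}{k_n}\le \frac{S_m}{m}\le\frac{k_{n+1}}{k_n}\frac{S_{k_{n+1}}}{k_{n+1}}$ for $k_n\le m\le k_{n+1}$, and finally let $\alpha\downarrow1$ along a countable sequence; the countable union of polar sets is polar by lower-continuity.

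\textbf{Step 1 (truncation).} Set $Y_i=X_i\wedge i$, a $C_{l,\mathrm{Lip}}$ function of $X_i$ (so $Y_i\in\mathcal{H}$), and $T_n=\sum_{i\le n}Y_i$. By Lemma~\ref{lem:2.1} we need $\sum_i V(X_i>i)<\infty$. Since the $X_i$ are identically distributed, $V(X_i>i)\le \mathbb{E}[\phi_i(X_1)]$ with $\phi_i$ a Lipschitz ramp that equals $1$ on $[i,\infty)$ and vanishes below $i-1$. Then
\[
\sum_i V(X_i>i)\le \sum_i\sup_{Q\in\mathcal{P}} Q(X_1>i-1)\le \sum_k\sup_Q\bigl(Q(k<X_1\le k+1)\bigr)\cdot C+\dots,
\]
and the tail sums are controlled by the hypothesis. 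Concretely, $\sup_Q Q(X_1>i-1)\le \sup_Q\sum_{k\ge i-1}Q(k<X_1\le k+1)\le\sum_{k\ge i-1}\sup_Q\int_k^{k+1}\frac{x}{k\vee1}\,dF_Q$. Summing over $i$ and exchanging the order gives at most $C\sum_k \sup_Q\int_k^{k+1}x\,dF_Q<\infty$. Hence q.s. $X_i=Y_i$ eventually, so it suffices to study $T_n/n$.

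\textbf{Step 2 (means of truncations).} We have $\mathbb{E}[Y_i]\le\overline{\mu}$ by monotonicity. We also need $\mathbb{E}[-Y_i]\to\mathbb{E}[-X_1]$, i.e.\ $\mathbb{E}[X_1-X_1\wedge i]\to0$. This follows because $\mathbb{E}[(X_1-i)^+]\le\sum_{k\ge i}\sup_Q\int_k^{k+1}x\,dF_Q\to0$. This is exactly where the summable-supremum condition replaces Hu's majorant.

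\textbf{Step 3 (variance estimate along $k_n$).} This is the main obstacle. Without linearity we cannot centre and use Chebyshev directly. Instead we use exponential or second-moment bounds that exploit independence in the form of Definition~\ref{def:2.5}. For $\varepsilon>0$ we want
\[
\sum_n V\Bigl(T_{k_n}-\sum_{i\le k_n}\mathbb{E}[Y_i]>\varepsilon k_n\Bigr)<\infty.
\]
The plan is to prove the sublinear Kolmogorov-type bound $\mathbb{E}\bigl[\bigl((T_m-\sum_{i\le m}\mathbb{E}[Y_i])^+\bigr)^2\bigr]\le\sum_{i\le m}\mathbb{E}[(Y_i-\mathbb{E}Y_i)^2]$ by induction on $m$. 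The induction uses independence: conditioning on $x=T_{m-1}$ gives $\mathbb{E}[((x+Y_m-\mathbb{E}Y_m)^+)^2]\le (x^+)^2+\mathbb{E}[(Y_m-\mathbb{E}Y_m)^2]$, since the cross term $2x^+\mathbb{E}[Y_m-\mathbb{E}Y_m]=0$ by positive homogeneity. Applying Lemma~\ref{lem:2.2} then reduces the problem to $\sum_n k_n^{-2}\sum_{i\le k_n}\mathbb{E}[Y_i^2]\le C\sum_i i^{-2}\mathbb{E}[Y_i^2]$. Following Etemadi, this is at most $C\sum_i i^{-2}\sum_{k<i}\sup_Q\int_k^{k+1}x^2\,dF_Q\le C'\sum_k\sup_Q\int_k^{k+1}x\,dF_Q<\infty$. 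Here we pull the supremum inside the decomposition of $\mathbb{E}[Y_i^2]$ into unit blocks, using Proposition~\ref{prop:2.2}(2) and $x^2\le(k+1)x$ on $[k,k+1]$. Lemma~\ref{lem:2.1} then gives $\limsup T_{k_n}/k_n\le\overline{\mu}+\varepsilon$ q.s.

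\textbf{Step 4 (lower bound).} The symmetric argument applied to $-Y_i$ uses $\mathbb{E}[-Y_i]\to-\underline{\mu}$ from Step 2 (a Cesàro mean of these converges as well). It yields $\liminf T_{k_n}/k_n\ge\underline{\mu}-\varepsilon$ q.s. We then conclude via Step 1 and the interpolation.
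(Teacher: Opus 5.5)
Your argument is correct and shares the paper's skeleton: Etemadi truncation, a second-moment estimate along $k_n=\lfloor\alpha^n\rfloor$, the capacity Borel--Cantelli lemma, monotone interpolation, and $\alpha\downarrow1$. The places where you deviate are exactly where the paper's proof is weakest.

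\textbf{Centring.} The paper bounds the two-sided event $\{|S^*_{k_n}-\mathbb{E}S^*_{k_n}|>\varepsilon k_n\}$ through a two-sided bound $\mathbb{E}[(\sum_{i\le k_n}(Y_i-\mathbb{E}Y_i))^2]\le c\sum_{i\le k_n}\mathbb{E}Y_i^2$. When $\mathcal{E}[X_1]<\mathbb{E}[X_1]$ this cannot hold. By independence, $\mathcal{E}[\sum_{i\le k_n}(Y_i-\mathbb{E}Y_i)]=-\sum_{i\le k_n}(\mathbb{E}Y_i-\mathcal{E}Y_i)$, which is of order $-k_n$. Hence $k_n^{-2}\mathbb{E}[(\sum_{i\le k_n}(Y_i-\mathbb{E}Y_i))^2]$ stays bounded away from zero, while the right-hand side is summable. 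Your one-sided inequality for $\mathbb{E}[((T_m-\sum_{i\le m}\mathbb{E}Y_i)^+)^2]$ is the version that is actually true. You prove it by induction directly from Definition~\ref{def:2.5}, with the cross term killed by $\mathbb{E}[Y_m-\mathbb{E}Y_m]=0$, and it is all that either half of the theorem needs.

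\textbf{Truncation.} Your continuous truncation $X_i\wedge i$ and the ramps $\phi_i$ keep everything inside the $C_{l,\mathrm{Lip}}$ class in which independence and identical distribution are defined. The paper's $X_iI_{\{X_i\le i\}}$ needs an extra approximation argument to inherit independence.

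\textbf{Means.} Your Step 2 makes explicit that the lower bound needs $\mathbb{E}[(X_1-i)^+]\to0$, which you correctly derive from the summability hypothesis. Lower-continuity alone does not give $\mathbb{E}[-(X_1\wedge i)]\to\mathbb{E}[-X_1]$, and the paper's ``same truncation argument for $W_i$'' silently relies on this. For the upper bound you rightly note that only $\mathbb{E}Y_i\le\mathbb{E}X_1$ is needed.

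There is one slip to repair. With $Y_i=X_i\wedge i$ you have $E_Q[Y_i^2]=\int_{[0,i]}x^2\,dF_Q+i^2Q(X_1>i)$. So the inequality $\mathbb{E}[Y_i^2]\le\sum_{k<i}\sup_Q\int_k^{k+1}x^2\,dF_Q$ in Step 3 is false as written; take $X_1\equiv M$ with $M$ large. After division by $i^2$ the omitted term contributes $\sum_i\sup_Q Q(X_1>i)$, which is finite by your Step 1, so the fix is one line.

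Two smaller points should also be stated. First, $\mathbb{E}[(Y_i-\mathbb{E}Y_i)^2]\le2\mathbb{E}[Y_i^2]+2(\mathbb{E}Y_i)^2\le4\mathbb{E}[Y_i^2]$, and the analogue with $\mathcal{E}Y_i$ for $-Y_i$, to pass from centred to uncentred moments. Second, $Q(k<X_1\le k+1)\le\int_k^{k+1}x\,dF_Q/(k\vee1)$ fails for $k=0$, though this only affects the single term $i=1$.
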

	
	\begin{proof}
		\textbf{Proof of the upper bound of the SLLN:}
		We use Etemadi's truncation method. Define $Y_i := X_i \cdot I_{\{X_i \le i\}}$ and let $S_n^* := \sum_{i=1}^n Y_i$. Also, for a fixed $\alpha > 1$, we set $k_n := \lfloor \alpha^n \rfloor$.
		
		By Chebyshev's inequality (Lemma \ref{lem:2.2}) and Rosenthal's inequality for independent random variables under sublinear expectations (see \cite{Zhang16}), there exists a constant $c > 0$ such that:
		\begin{align*}
			\sum_{n=1}^{\infty} V \left\{ \frac{|S_{k_n}^* - \mathbb{E}S_{k_n}^*|}{k_n} > \varepsilon \right\} 
			&\le \frac{1}{\varepsilon^2} \sum_{n=1}^{\infty} \frac{1}{k_n^2} \mathbb{E} \left[ \left( \sum_{i=1}^{k_n} (Y_i - \mathbb{E}Y_i) \right)^2 \right] \\[10pt]
			&\le c \sum_{n=1}^{\infty} \frac{1}{k_n^2} \sum_{i=1}^{k_n} \mathbb{E}Y_i^2 \\[10pt]
			&= c \sum_{i=1}^{\infty} \mathbb{E}[Y_i^2] \sum_{n: k_n \ge i} \frac{1}{k_n^2} \\[10pt]
			&\le c \sum_{i=1}^{\infty} \frac{1}{i^2} \mathbb{E}[Y_i^2] \\[10pt]
			&= c \sum_{i=1}^{\infty} \frac{1}{i^2} \sup_{Q \in \mathcal P} \int_0^i x^2 \mathrm{d} F_{Q}(x) \\[10pt]
			&\le c \sum_{i=1}^{\infty} \frac{1}{i^2} \left( \sum_{k=0}^{i-1} \sup_{Q \in \mathcal P} \int_k^{k+1} x^2 \mathrm{d} F_{Q}(x) \right) \\[10pt]
			&\leq c \sum_{k=0}^{\infty} \frac{1}{k+1}  \sup_{Q \in \mathcal P}  \int_k^{k+1} x^2 \mathrm{d} F_{Q}(x) \\[10pt]
			&\leq c \sum_{k=0}^{\infty} \frac{1}{k+1}  \sup_{Q \in \mathcal P}  \int_k^{k+1} (k+1) x \mathrm{d} F_{Q}(x) \\[10pt] 
			&= c \sum_{k=0}^{\infty} \sup_{Q \in \mathcal P}  \int_k^{k+1} x \mathrm{d} F_{Q}(x)
			< \infty.
		\end{align*}
		
		By subadditivity:
		\[
		\frac{\mathbb{E}S_{k_n}^*}{k_n} = \frac{1}{k_n} \mathbb{E}\left[ \sum_{i=1}^{k_n} Y_i \right] \leq \frac{1}{k_n} \sum_{i=1}^{k_n} \mathbb{E}Y_i.
		\]
		Since the random variables are identically distributed, we have $\mathbb{E}[Y_n] = \mathbb{E}[X_1 \cdot I_{\{X_1 \le n\}}]$. 
		Since $X_1 \geq 0$, the sequence $X_1 \cdot I_{\{X_1 \le n\}} \uparrow X_1$ monotonically as $n \to \infty$. Therefore, by the lower-continuity of the sublinear expectation $\mathbb{E}$, we obtain
		\[
		\lim_{n \to \infty} \mathbb{E}[Y_n] = \mathbb{E}[X_1].
		\]
		By Cesaro's lemma, the arithmetic mean converges to the same limit:
		\[
		\limsup_{n \to \infty} \frac{\mathbb{E}S_{k_n}^*}{k_n} \leq \lim_{n \to \infty} \frac{1}{k_n} \sum_{i=1}^{k_n} \mathbb{E}Y_i = \mathbb{E}X_1.
		\]
		Therefore, by the Borel--Cantelli lemma:
		\[
		\limsup_{n \to \infty}\frac{ S_{k_n}^*}{k_n} \leq \mathbb{E} [X_1] \quad \text{q.s.}
		\]
		
		Next, we estimate the capacity of the event $\{Y_n \neq X_n\}$:
		\begin{align*}
			\sum_{n=1}^{\infty} V\{ Y_n \neq X_n \} &= \sum_{n=1}^{\infty} V\{ X_n > n \} = \sum_{n=1}^{\infty}  \sup_{Q \in \mathcal P} Q(X_1 > n) \\[10pt]
			&\leq \sum_{n=1}^{\infty} \sum_{k=n}^{\infty} \sup_{Q \in \mathcal P} Q(k < X_1 \le k+1) \\[10pt]
			&= \sum_{k=1}^{\infty} \sum_{n=1}^{k} \sup_{Q \in \mathcal P} Q(k < X_1 \le k+1) \\[10pt]
			&= \sum_{k=1}^{\infty} k \sup_{Q \in \mathcal P} \int_k^{k+1} \mathrm{d} F_Q(x) \\[10pt]
			&\leq \sum_{k=1}^{\infty}  \sup_{Q \in \mathcal P}  \int_k^{k+1} x \mathrm{d} F_Q(x) < \infty.
		\end{align*}
		
		Thus, by the Borel--Cantelli lemma, $X_n \neq Y_n$ for only finitely many $n$ q.s. Hence,
		\[
		\limsup_{n \to \infty} \frac{S_{k_n}}{k_n} \leq \mathbb{E} [X_1] \quad \text{q.s.}
		\]
		
		Now, taking into account the monotonicity of $S_n$ (since $X_i \ge 0$), for any $m$ such that $k_n \le m < k_{n+1}$, we can deduce that:
		\[
		\frac{S_{k_{n}}}{k_{n+1}} \leq \frac{S_{m}}{m}  \leq \frac{S_{k_{n+1}}}{k_n}.
		\]
		Since $k_{n+1}/k_n \to \alpha$ as $n \to \infty$, the right-hand inequality can be rewritten as:
		\[
		\frac{S_m}{m} \leq \frac{S_{k_{n+1}}}{k_{n+1}} \cdot \frac{k_{n+1}}{k_n}.
		\]
		Taking the limit superior:
		\[
		\limsup_{m \to \infty} \frac{S_m}{m} \le \alpha \limsup_{n \to \infty} \frac{S_{k_{n+1}}}{k_{n+1}} \le \alpha \mathbb{E} [X_1] \quad \text{q.s.}
		\]
		Since this inequality holds quasi-surely for any $\alpha > 1$, letting $\alpha \downarrow 1$ along a countable sequence, we obtain the upper bound of the SLLN:
		\[
		V \left(\limsup_{n \to \infty} \frac{S_n}{n} > \mathbb{E} [X_1] \right) = 0.
		\]
		
		\textbf{Proof of the lower bound of the SLLN:}
		Consider the sequence $W_i = -X_i \leq 0$. Let $T_n = \sum_{i=1}^n W_i = -S_n$. 
		Applying the same truncation argument for $W_i$, by the Borel--Cantelli lemma for the subsequence $k_n = \lfloor \alpha^n \rfloor$, we get:
		\[
		\limsup_{n \to \infty} \frac{T_{k_n}}{k_n} \le \mathbb{E} [W_1] = \mathbb{E} [-X_1] \quad \text{q.s.}
		\]
		Since $W_i \le 0$, the sequence of sums $T_n$ is monotonically decreasing. Therefore, for any $m \in [k_n, k_{n+1})$, we have $T_{k_n} \ge T_m \ge T_{k_{n+1}}$. Dividing by $m$, we get:
		\[
		\frac{T_m}{m} \le \frac{T_{k_n}}{m} \le \frac{T_{k_n}}{k_{n+1}} = \frac{T_{k_n}}{k_n} \cdot \frac{k_n}{k_{n+1}}.
		\]
		Taking the limit superior with $k_{n+1}/k_n \to \alpha$:
		\[
		\limsup_{m \to \infty} \frac{T_m}{m} \le \frac{1}{\alpha} \limsup_{n \to \infty} \frac{T_{k_n}}{k_n} \le \frac{1}{\alpha} \mathbb{E} [-X_1] \quad \text{q.s.}
		\]
		Letting $\alpha \downarrow 1$ along a countable sequence, we obtain:
		\[
		\limsup_{n \to \infty} \frac{-S_n}{n} \le \mathbb{E} [-X_1] \quad \text{q.s.}
		\]
		Using the property $\limsup(-a_n) = -\liminf(a_n)$, we arrive at the lower bound:
		\[
		-\liminf_{n \to \infty} \frac{S_n}{n} \le \mathbb{E} [-X_1] \quad \implies \quad \liminf_{n \to \infty} \frac{S_n}{n} \ge -\mathbb{E} [-X_1] \quad \text{q.s.}
		\]
		Thus, we obtain
		\[
		V \left( \limsup_{n \to \infty} \frac{S_n}{n} > \mathbb{E} [X_1] \ \bigcup \ \liminf_{n \to \infty} \frac{S_n}{n} < -\mathbb{E} [-X_1] \right) = 0.
		\]
		The theorem is proved.
	\end{proof}
	
	\section{Comparison with Hu's Condition} 
	We present an example showing that our condition is weaker than the majorant condition by Cheng Hu \cite{Hu18} for a specific class of random variables.
	
	Consider $|X_n|$, $n = 1, 2, \dots$, where $X_n \sim \mathcal{N}(0, 1)$ is a sequence of i.i.d. random variables on a classical probability space ($\mathcal{P}=\{\mathsf P\}$). 
	
	\textbf{1. Convergence condition of the series:}
	For any $i \in \mathbb{N}$:
	\[
	\sum_{k=0}^\infty E_{\mathsf P} [|X_i| \cdot I_{\{k < |X_i| \le k+1\}}] = E_{\mathsf P}[|X_i|] = \sqrt{2/\pi} < \infty. 
	\]
	
	\textbf{2. Cheng Hu's majorant condition:}
	The condition $|X_n| \le |X|$ q.s. $\forall n$ requires $X \ge \sup_n |X_n|$. 
	Since $\mathsf P(|X_n| > M) > 0$ for any $M$, by the Borel--Cantelli lemma we have:
	\[
	\mathsf P\left(\sup_n |X_n| = \infty\right) = 1 \implies X = \infty \text{ q.s.} 
	\]
	Consequently, the limit of the tails is $\lim_{n \to \infty} \mathbb{E}[|X| I(|X| > n)] \neq 0$, as the majorant is not integrable.
	
	\section*{Acknowledgements}
	The author is supported by a scholarship from the Foundation for the Advancement of Theoretical Physics and Mathematics ``BASIS''.


\begin{thebibliography}{99}
		
		\bibitem{Peng97}
		S. Peng, Backward SDE and related $g$-expectation, Backward Stochastic Differential Equations, Pitman Research Notes in Math. Series, vol. 364, 1997, 141--160.
		
		\bibitem{Peng07}
		S. Peng, $G$-expectation, $G$-Brownian motion and related stochastic calculus of Ito type, Proceedings of the 2005 Abel Symposium, Springer-Verlag, Berlin, Heidelberg, 2007, 541--567.
		
		\bibitem{Peng10}
		S. Peng, Nonlinear expectations and stochastic calculus under uncertainty: with robust CLT and G-Brownian motion, Probability Theory and Stochastic Modelling, vol. 95, Springer, Berlin, Heidelberg, 2019.
		
		\bibitem{Hu18}
		C. Hu, Strong laws of large numbers for sublinear expectation under controlled 1st moment condition, Chinese Ann. Math. Ser. B, 2018, 39(5), 791--804.
		
		\bibitem{Etemadi81}
		N. Etemadi, An elementary proof of the strong law of large numbers, Z. Wahrscheinlichkeitstheorie verw. Gebiete, 1981, 55, 119--122.
		
		\bibitem{Peng08}
		S. Peng, Multi-dimensional $G$-Brownian motion and related stochastic calculus under $G$-expectation, Stoch. Proc. Appl., 2008, 118(12), 2223--2253.
		
		\bibitem{Chen13}
		Z. Chen, P. Wu, B. Li, A strong law of large numbers for non-additive probabilities, Int. J. Approx. Reason., 2013, 54(3), 365--377.
		
		\bibitem{Zhang16}
		L. Zhang, Rosenthal's inequalities for independent and negatively dependent random variables under sublinear expectation with applications, Sci. China Math., 2016, 59(4), 751--768.
		
	\end{thebibliography}
\end{document}